\title[Homological mirror symmetry for K3 surface]
{On homological mirror symmetry
for the complement of a smooth ample divisor
in a K3 surface}
\author[Y.~Lekili]{Yank\i\ Lekili}
\address{
Department of Mathematics,
Imperial College London,
South Kensington,
London,
SW7 2AZ}
\email{y.lekili@imperial.ac.uk}
\author[K.~Ueda]{Kazushi Ueda}
\address{
Graduate School of Mathematical Sciences,
The University of Tokyo,
3-8-1 Komaba,
Meguro-ku,
Tokyo,
153-8914,
Japan.}
\email{kazushi@ms.u-tokyo.ac.jp}
\date{}
\begin{document}

\begin{abstract}
We introduce a conjecture on homological mirror symmetry
relating the symplectic topology of the complement of a smooth ample divisor
in a K3 surface to algebraic geometry of type III degenerations,
and prove it when the degree of the divisor is either $2$ or $4$.
\end{abstract}

\maketitle

Let $X$ be a K3 surface and $D$ be a smooth ample divisor.
Since the affine variety
$
U \coloneqq X \setminus D
$
is simply connected
and has the trivial canonical bundle,
the wrapped Fukaya category $\cW(U)$
(and its full subcategory
$\cF(U)$ consisting of compact Lagrangians)
comes with a unique $\bZ$-grading.
The symplectomorphism type of the pair $(X,D)$ is determined
by the degree $d \coloneqq D \cdot D$
and the index $k$ of primitivity
(i.e., the maximal $k$ such that $[D]/k \in H_2(X;\bZ)$).
The degree $d$ can be any positive even integer,
and the index $k$
can be any positive integer
satisfying $2 k^2 | d$.

A \emph{Type III K3 surface}
is a surface $X$
obtained from a collection
$\lb X_i, D_i \rb_{i=1}^n$
of smooth rational surfaces $X_i$
with anticanonical cycles $D_i \in |-K_{X_i}|$
by identifying irreducible components of $D_i$
in such a way that
the dual intersection complex is a triangulation of $S^2$.
Each irreducible double curve $C \subset D_i \cap D_j$
must satisfy the \emph{triple point formula}
\begin{align}
  \lb C \rb^2_{X_i} + \lb C \rb^2_{X_j} =
  \begin{cases}
    0 & \text{$C$ is a nodal curve}, \\
    -2 & \text{$C$ is smooth}.
  \end{cases}
\end{align}

We call
the number of triangles
in the dual intersection complex
and the index of primitivity
of the monodromy logarithm
of the degeneration
(which is also determined by the dual intersection complex
\cite[Theorem 6.5]{MR813580})
as the \emph{degree} and the \emph{index}
of the Type III K3 surface.
Any Type III K3 surfaces with the same degree and index
are related by a sequence of Type I and Type II modifications
shown in Figures \ref{fg:type1} and \ref{fg:type2}
\cite[Theorem 0.6.3]{MR813580}.
\begin{figure}[h]
\begin{minipage}{.45 \linewidth}
  \begin{center}
    \begin{tikzpicture}[scale=.6]
      \draw (0,1) -- (1,0) -- (0,-1);
      \draw (1,0) -- node[above] {$a$} node[below] {$b$} (3,0);
      \draw (4,1) -- (3,0) -- (4,-1);
      \draw [->] (5,0) -- (6,0);
      \draw (7,1) -- (8,0) -- (7,-1);
      \draw (8,0) -- node[above] {$a-1$} node[below] {$b+1$} (11,0);
      \draw (12,1) -- (11,0) -- (12,-1);
    \end{tikzpicture}
  \end{center}
  \caption{Type I modification}
\label{fg:type1}
\end{minipage}
\begin{minipage}{.45 \linewidth}
  \begin{center}
    \begin{tikzpicture}[scale=.6]
      \draw (0,1) -- (1,0) -- (0,-1);
      \draw (1,0) -- node[above] {$-1$} node[below] {$-1$} (3,0);
      \draw (4,1) -- (3,0) -- (4,-1);
      \draw [->] (5,0) -- (6,0);
      \draw (7,2) -- (8,1) -- (9,2);
      \draw (8,1) -- node[left] {$-1$} node[right] {$-1$} (8,-1);
      \draw (7,-2) -- (8,-1) -- (9,-2);
    \end{tikzpicture}
  \end{center}
  \caption{Type II modification}
\label{fg:type2}
\end{minipage}
\end{figure}

Here, the integer on an edge of the intersection complex
denotes the self-intersection number of the corresponding
irreducible double curve.
These modifications are given by the Atiyah flop on the total space of the smoothing,
and hence do not change the derived category $\coh X$ of coherent sheaves
by \cite[Theorem 1.1]{MR2593258}.  

We fix the complex structure of a Type III K3 surface
by the following two conditions:
\begin{enumerate}
  \item For any $i \in \{ 1, \ldots, n \}$,
  the complex structure of the log Calabi--Yau surface $X_i \setminus D_i$
  is the one appearing in \cite{2005.05010}.
  This means that a suitable corner blow-up of $(X_i, D_i)$
  is obtained from a toric surface
  by blowing up (possibly infinitely near) points on the toric boundary
  whose toric coordinates are $-1$.
  \item Irreducible components of the anticanonical cycles are glued torically.
  This means that
  when two irreducible curves on the boundaries are glued together,
  the centers of the blow-ups are identified.
\end{enumerate}

\begin{conjecture} \label{cj:main}
Let $U$ be the complement of a smooth ample divisor
of degree $d$ and index $k$ in a K3 surface.
Let further $X$ be a Type III K3 surface
of degree $d$ and index $k$.
Then there exist equivalences
\begin{align} \label{eq:wfuk=coh}
  \cW(U) \simeq \coh X
\end{align}
and
\begin{align} \label{eq:fuk=perf}
  \cF(U) \simeq \perf X.
\end{align}
\end{conjecture}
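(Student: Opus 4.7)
The plan is to establish both equivalences in the cases $d \in \{2, 4\}$, where the divisibility condition $2 k^2 \mid d$ forces the index of primitivity to be $k = 1$, by producing explicit generators on each side and matching their endomorphism $A_\infty$-algebras.

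On the symplectic side, I would realize $U$ through an explicit Lefschetz-type fibration. For $d = 2$ one can use the hyperelliptic model: $X$ is the double cover of $\bP^2$ branched along a smooth sextic and $D$ is the preimage of a line, so that $U$ inherits a Lefschetz fibration with genus-two fibers, and a distinguished basis of thimbles (extended to non-compact admissible Lagrangians) gives a candidate family of generators. For $d = 4$ take a smooth quartic $X \subset \bP^3$ with $D$ a hyperplane section; restricting a generic pencil of hyperplanes through $D$ yields a Lefschetz fibration on $U$ whose thimbles again play the role of generators. In each case, split-generation would be established by adapting Abouzaid's generation criterion to the wrapped Fukaya category of an affine Calabi--Yau surface.

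On the algebraic side, I would choose a particularly convenient Type III model $X$ of the required degree and index. Because Type I and Type II modifications preserve $\coh X$ by Kawamata's theorem quoted in the excerpt, I am free to pick a combinatorial presentation that makes a generator transparent, for instance one assembled from structure sheaves of the irreducible components together with twists along the double curves. The endomorphism algebra of this generator can then be computed by a local calculation near each double curve, controlled by the triple point formula, and glued over the dual intersection complex, producing a finite-dimensional $A_\infty$-algebra.

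The main obstacle is matching the two $A_\infty$-algebras, including their higher products, rather than merely their cohomologies. I would exploit the canonical $\bZ$-grading on each side, together with the Calabi--Yau structure, to rigidify the $A_\infty$-structure through a Hochschild cohomology computation on the algebraic side. A formality or deformation-theoretic argument should then reduce the identification to finitely many coefficients, which may be pinned down by comparing classical invariants such as the image of the point class under the closed--open map. Once \eqref{eq:wfuk=coh} is established, the statement \eqref{eq:fuk=perf} will follow by restricting to the full subcategory generated by compact Lagrangians and recognizing its image as the subcategory of perfect complexes, which in this affine Calabi--Yau setting is characterized by properness.
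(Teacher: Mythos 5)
There is a genuine gap: your proposal is a strategy outline whose central step is deferred to a ``formality or deformation-theoretic argument [that] should then reduce the identification to finitely many coefficients.'' That step is not a routine verification --- it is essentially the entire content of the prior paper on HMS for Milnor fibers that the present paper cites, where the matching of $A_\infty$-structures is carried out via a moduli-of-$A_\infty$-structures argument after substantial Hochschild cohomology computations. As written, you have not exhibited the generator on either side concretely enough to even state what the two $A_\infty$-algebras are, let alone shown that the moduli space of $A_\infty$-structures on the common cohomology algebra is small enough for a finite number of invariants (such as the closed--open image of the point class) to pin down the equivalence. Your A-side models are correct (the double sextic minus a line and the quartic minus a hyperplane section are indeed the Milnor fibers $x^2+y^6+z^6=1$ and $x^4+y^4+z^4=1$), and your observation that $2k^2 \mid d$ forces $k=1$ is right, but correctness of the setup does not close the argument. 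The final step is also too quick: deducing $\cF(U) \simeq \perf X$ from $\cW(U) \simeq \coh X$ by ``characterizing perfect complexes by properness'' requires an intrinsic characterization of $\cF(U)$ inside $\cW(U)$ and of $\perf X$ inside $\coh X$ that actually match under the equivalence; this is not automatic and is instead imported wholesale from the cited HMS result in the paper.

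The paper's route is quite different and avoids all of this: it quotes the already-established equivalences $\cW(U) \simeq \coh Z$ and $\cF(U) \simeq \perf Z$ for the Milnor fiber $U$ of the relevant Brieskorn--Pham singularity, where $Z$ is an explicit singular hypersurface in a quotient stack $\bP$; it then passes to a crepant resolution and invokes Kawamata's theorem to replace $Z$ by the central fiber $X$ of a semistable degeneration without changing $\coh$ or $\perf$. The genuinely new work is birational geometry: identifying the irreducible components of $X$ (a blown-up rational surface plus compact toric divisors of the crepant resolution of the singular point), computing the self-intersection numbers of the double curves to verify the triple point formula, and reading off the intersection complex, degree, and index so that $X$ is recognized as a Type III K3 surface of degree $d$ and index $k$. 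None of this identification step --- which is where the conjecture's B-side actually gets connected to the A-side --- appears in your proposal; you instead assume you may freely choose a convenient Type III model, which is legitimate only after one knows which degree and index the mirror produces.
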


\begin{theorem} \label{th:main}
\pref{cj:main} holds in degree $2$ and $4$.
\end{theorem}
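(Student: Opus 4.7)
The plan is to prove \pref{cj:main} in each of degrees $d = 2$ and $d = 4$ by exhibiting explicit generators on both sides of \eqref{eq:wfuk=coh}, computing their $A_\infty$-endomorphism algebras, and matching them. The derived invariance under Type I/II modifications noted above lets me choose any convenient Type III representative of the given degree and primitivity index, and $(d, k)$ determines the symplectic type of $(X^{\mathrm{K3}}, D)$; the constraint $2k^{2} \mid d$ forces $k = 1$ in both of our cases, which simplifies the combinatorics considerably.

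On the algebraic side, I would fix a convenient Type III representative $X$ in each degree. After the corner blow-ups dictated by the complex structure fixed in the excerpt, each building block is a smooth toric log Calabi--Yau surface carrying a standard tilting bundle built from its toric fan. I would then assemble these local tilting data into a global tilting object $T \in \coh X$ by a descent/Mayer--Vietoris argument along the double curves, and read off the $A_\infty$-endomorphism algebra $A \coloneqq \mathrm{End}^{*}(T)$ as a path algebra with relations whose combinatorial shape mirrors the dual intersection complex and the toric data of the components.

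On the symplectic side, I would present $U$ as the total space of an explicit Lefschetz fibration: for $d = 2$, $U$ is the double cover of $\mathbb{C}^{2}$ branched over an affine sextic; for $d = 4$, $U$ is the complement of a hyperplane section in a smooth quartic in $\mathbb{P}^{3}$. In both cases a pencil of hyperplanes (or lines, after a suitable projection) supplies a Lefschetz fibration whose smooth fibre is a punctured curve of genus $g = 1 + d/2 \in \{2, 3\}$. Combining generation results for wrapped Fukaya categories of Lefschetz fibrations with homological mirror symmetry for the fibre, which is available in these low genera, produces an explicit set of Lagrangian generators of $\cW(U)$ together with a presentation of their $A_\infty$-endomorphism algebra in terms of the vanishing cycles and thimbles.

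Matching this presentation with $A$ yields \eqref{eq:wfuk=coh}; the second equivalence \eqref{eq:fuk=perf} then follows by identifying the compact Lagrangian generators of $\cF(U)$ with the sheaves generating $\perf X$, both recognised intrinsically as the full subcategories of proper objects. The principal obstacle is the $A_\infty$-matching of the higher products, which is sensitive to the precise complex structure on $X$ fixed in the excerpt; I expect it to be controllable either by a formality argument for $A$ or, failing that, by a Hochschild-cohomology calculation identifying the $A_\infty$-deformation classes on both sides under the mirror map relating complex-structure moduli of the Type III K3 to the symplectic data of $(X^{\mathrm{K3}}, D)$.
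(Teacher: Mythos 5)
Your plan takes a genuinely different route from the paper, and the difference matters. The paper does not build generators and match $A_\infty$-algebras from scratch: it observes that $U$ is the Milnor fiber of the Brieskorn--Pham polynomial $x^2+y^6+z^6$ (resp.\ $x^4+y^4+z^4$), invokes the already-established equivalences $\cW(U)\simeq \coh Z$ and $\cF(U)\simeq \perf Z$ for a hypersurface $Z$ in a quotient stack from \cite[Theorem 1.7]{1806.04345}, passes to a crepant resolution $X$ of the coarse moduli space using \cite[Theorem 2.1]{MR2593258}, and then spends essentially all of its effort on the remaining geometric task: verifying that this particular $X$ is a Type III K3 surface of the correct degree and index, with the complex structure fixed in the preamble, by computing its irreducible components, the double curves and their self-intersection numbers, and the intersection complex. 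Your proposal bypasses these citations and would in effect have to reprove \cite[Theorem 1.7]{1806.04345} in these two cases; it also never addresses the identification step at all, i.e.\ why the mirror you would construct is the Type III K3 surface of the conjecture rather than merely some proper algebra or scheme.

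The concrete gap is at your central step, the $A_\infty$-matching. Formality of $A$ is not a viable option: the category $\coh X$ of a normal crossings K3 surface is a nontrivial $A_\infty$-deformation of its cohomology-level algebra, and a formality argument would land on the wrong (degenerate) mirror. The Hochschild-cohomology route is the correct one, but it is precisely the content of the work you are bypassing: one must compute the relevant Hochschild cohomology, show that the space of $A_\infty$-structures on the fixed graded algebra is cut down to finitely many parameters, and then identify those parameters on both sides --- and this is exactly where the complex structure conditions (1) and (2) of the preamble enter, which you acknowledge but do not resolve. Two further steps are asserted without justification: the existence of a global tilting object on the reducible surface $X$ obtained by gluing toric tilting bundles along the double curves (descent of tilting objects across a normal crossings locus is not automatic, and in the paper the generator comes for free from $Z$ via the McKay-type equivalence), and the passage from homological mirror symmetry for the punctured genus $2$ or $3$ fiber to a presentation of $\cW(U)$ for the total space of the Lefschetz fibration, which requires substantial fiber-to-total-space machinery. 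The elementary observations are fine --- $k=1$ in both degrees, and $U$ is a double plane branched over an affine sextic for $d=2$ and an affine quartic surface for $d=4$ --- but as written the proposal is a program whose hardest steps are deferred, not a proof.
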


\begin{proof}
The Milnor fiber
\begin{align}
  U \coloneqq \lc (x,y,z) \in \bC^2 \relmid x^2 + y^6 + z^6 = 1 \rc
\end{align}
of the Brieskorn--Pham singularity
obtained as the suspension
of the Sebastiani--Thom sum of two copies of the $A_5$-singularity
is the complement of a smooth ample divisor
of degree $2$ in a K3 surface.

Let
$
\bP \coloneqq \ld \lb \bA^4 \setminus \bszero \rb \middle/ \Gamma \rd
$
be the quotient stack of
the complement of the origin $\bszero$
in $\bA^4 = \Spec \bC[x,y,z,w]$
by the diagonal action of
\begin{align}
  \Gamma &\coloneqq \lc (\alpha, \beta, \gamma, \delta) \in (\Gm)^4 \relmid
  \alpha^2 = \beta^6 = \gamma^6 = \alpha \beta \gamma \delta \rc,
\end{align}
and $Z$ be the hypersurface of $\bP$
defined by
$
x^2+y^6+z^6+x y z w.
$
It is shown
in \cite[Theorem 1.7.(ii)]{1806.04345}
that
\begin{align}
  \cW(U) \simeq \coh Z
\end{align}
and
\begin{align}
  \cF(U) \simeq \perf Z.
\end{align}

Let $\bPt$ be a crepant resolution of the coarse moduli scheme of $\bP$,
and $X$ be the hypersurface of $\bPt$
defined by
$
 x^2 + y^6 + z^6 + x y z w.
$
One has
\begin{align}
  \coh Z \simeq \coh X
\end{align}
and
\begin{align}
  \perf Z \simeq \perf X
\end{align}
by (the proof of) \cite[Theorem 2.1]{MR2593258}.

Note that $Z$ has a unique singular point
$[0:0:0:1]$
contained in the open substack of $\bP$
defined by $w \ne 0$
whose coarse moduli scheme $V$ is isomorphic to
\(
 \bA^3 \left/ \la \frac{1}{2}(1,0,1), \frac{1}{6}(0,1,5) \ra \right..
\)
One can choose $\bPt$ in such a way
that the inverse image of $V$
is the toric variety
whose fan is the cone over the triangulation
given in \pref{fg:K3_degree2_fan1}.
Then the scheme $X$
consists of the strict transform $X_1$ of $Z$
and two more irreducible components $X_2$ and $X_3$,
which are the compact toric divisors
in the crepant resolution of $V$.

\begin{figure}
\begin{minipage}{.5 \linewidth}
\begin{center}
  \begin{tikzpicture}
    \foreach \y in {0,...,6}
    {
  \filldraw (0,\y) circle [radius=1mm];
  }
  \foreach \y in {0,...,3}
  {
  \filldraw (1,\y) circle [radius=1mm];
  }
  \filldraw (2,0) circle [radius=1mm];
  \draw (0,0) -- (2,0) -- (0,6) -- cycle;
  \draw (1,1) -- (0,0);
  \draw (1,1) -- (1,0);
  \draw (1,1) -- (2,0);
  \draw (1,1) -- (1,2);
  \draw (1,1) -- (0,3);
  \draw (1,1) -- (0,2);
  \draw (1,1) -- (0,1);
  \draw (1,2) -- (2,0);
  \draw (1,2) -- (1,3);
  \draw (1,2) -- (0,6);
  \draw (1,2) -- (0,5);
  \draw (1,2) -- (0,4);
  \draw (1,2) -- (0,3);
  \end{tikzpicture}
\end{center}
\caption{A crepant resolution of $\bA^3\left/ \la \frac{1}{2}(1,0,1), \frac{1}{6}(0,1,5) \ra \right.$}
\label{fg:K3_degree2_fan1}
\end{minipage}
\begin{minipage}{.45 \linewidth}
  \begin{center}
    \begin{tikzpicture}
      \draw (1,1) -- (0,0) node[below left] {$E_3$};
      \draw (1,1) -- (1,0) node[below] {$E_1-E_3$};
      \draw (1,1) -- (2,0) node[below right] {$H-E_1$};
      \draw (1,1) -- (1,2) node[above right] {$H-E_2-E_4$};
      \draw (1,1) -- (0,3) node[above] {$E_4$};
      \draw (1,1) -- (0,2) node[above left] {$E_2-E_4$};
      \draw (1,1) -- (0,1) node[below left] {$H-E_1-E_2-E_3$};
    \end{tikzpicture}
    \end{center}
\caption{A weak toric del Pezzo surface of degree 5}
\label{fg:K3_degree2_fan2}
\end{minipage}
\end{figure}

An explicit description in terms of toric coordinates shows that
the family defined by
$
x^2+y^6+z^6+t w^6+xyzw=0
$
inside $\bPt$
is a semistable degeneration of K3 surfaces
whose central fiber is $X$.

Each of $X_2$ and $X_3$
is the toric surface
obtained by blowing up $\bP^2$ four times
as designated by the fan
given in \pref{fg:K3_degree2_fan2}.
The double curve $C = X_2 \cap X_3$
is described on each of $X_2$ and $X_3$
as the $(-1)$-curve
obtained as the strict transform
$H-E_2-E_4$
of the line passing through the pair of infinitely near points
which are the centers of the successive blow-ups.
It follows that
the intersection complex of $X$ must be
the one shown in \pref{fg:bigon1}.
\begin{figure}
  \begin{center}
    \begin{tikzpicture}[scale=1.5]
      \draw (0,0) .. controls (1,1) and (2,1) .. node[above] {$-4$} node[below] {$2$} (3,0);
      \draw (0,0) -- node[above] {$-1$} node[below] {$-1$} (3,0);
      \draw (0,0) .. controls (1,-1) and (2,-1) .. node[above] {$2$} node[below] {$-4$} (3,0);
    \end{tikzpicture}
  \end{center}
  \caption{The division of $S^2$ into three bigons}
  \label{fg:bigon1}
\end{figure}
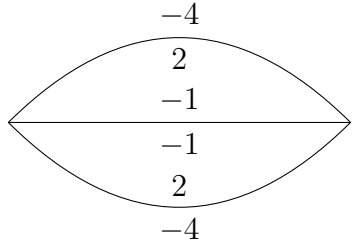
The dual intersection complex is a triangulation of $S^2$
by two triangles.

In order to determine the complex structure of $X_1$,
let $Y$ be the graph of the rational map
\begin{align}
  Z \dashrightarrow \bP^2, \quad
  [x:y:z:w] \mapsto [p:q:r]=[x^2:y^6:z^6],
\end{align}
i.e., the closure in $Z \times \bP^2$
of the graph of the regular map
from the complement of $x=y=z=0$ in $Z$ to $\bP^2$.
The projection $Y \to \bP^2$ is the stacky weighted blow-up
of $\bP^2$ of weights $(1,2)$, $(1,6)$, and $(1,6)$
at the intersections of the coordinate lines
and the line defined by $p+q+r=0$.
The minimal resolution $\Yt$ of the coarse moduli scheme of $Y$
is a rational surface with an anticanonical cycle
whose irreducible components have
self-intersection numbers $-1$, $-5$, and $-5$.
By blowing down the $(-1)$-component
of the anticanonical cycle,
one obtains the rational surface $X_1$ with an anticanonical cycle
consisting of two $(-4)$-curves.
This concludes the proof of \pref{th:main}
for degree $2$.

The story for the degree 4 case is completely parallel.
The complement of a smooth ample divisor of degree 4 in a K3 surface
is the Milnor fiber of the Brieskorn--Pham singularity
obtained as the Sebastiani--Thom sum
of three copies of the $A_3$-singularity.
Let $\bP$ be the quotient stack of
$\bA^4 \setminus \bszero$
by the diagonal action of
\begin{align}
  \Gamma &\coloneqq \lc (\alpha, \beta, \gamma, \delta) \in (\Gm)^4 \relmid
  \alpha^4 = \beta^4 = \gamma^4 = \alpha \beta \gamma \delta \rc,
\end{align}
and $Z$ be the hypersurface of
$
\bP
$
defined by
$
x^4+y^4+z^4+x y z w.
$
It is shown
in \cite[Theorem 1.7.(i)]{1806.04345}
that
\begin{align}
  \cW(U) \simeq \coh Z
\end{align}
and
\begin{align}
  \cF(U) \simeq \perf Z.
\end{align}
A stacky resolution $Y$ of $Z$ is obtained as the graph of the rational map
\begin{align}
  Z \dashrightarrow \bP^2, \ 
  [x:y:z:w] \mapsto [x^4:y^4:z^4].
\end{align}
The projection $Y \to \bP^2$ is the weighted blow-up
of $\bP^2$ of weight $(1,4)$
at the three intersection points of the coordinate lines
and a general line.
The minimal resolution of the coarse moduli scheme of $Z$
is a rational surface with an anticanonical cycle
consisting of three $(-3)$-curves.
The scheme $X$ consists of this surface
and three more irreducible components,
which are the compact toric divisors
in the crepant resolution of
$
\bA^3/\la \frac{1}{4}(1,3,0), \frac{1}{4}(0,1,3) \ra
$
whose fan is shown in \pref{fg:K3_degree4_fan1}.
\begin{figure}
\begin{minipage}{.5 \linewidth}
\begin{center}
\begin{tikzpicture}
\foreach \x in {0,1,2,3,4}
{
\filldraw (\x,0) circle [radius=1mm];
}
\foreach \x in {0,1,2,3}
{
\filldraw (\x,1) circle [radius=1mm];
}
\foreach \x in {0,1,2}
{
\filldraw (\x,2) circle [radius=1mm];
}
\foreach \x in {0,1}
{
\filldraw (\x,3) circle [radius=1mm];
}
\filldraw (0,4) circle [radius=1mm];
\draw (0,0) -- (4,0) -- (0,4) -- cycle;
\draw (0,2) -- (2,2);
\draw (0,1) -- (3,1);
\draw (1,0) -- (1,3);
\draw (2,0) -- (2,2);
\draw (1,1) -- (0,0);
\draw (1,1) -- (0,2);
\draw (1,1) -- (2,0);
\draw (2,1) -- (3,0);
\draw (2,1) -- (4,0);
\draw (2,1) -- (1,2);
\draw (1,2) -- (0,3);
\draw (1,2) -- (0,4);
\draw (1,2) -- (2,1);
\end{tikzpicture}
\end{center}
\caption{A crepant resolution of $\bA^3/\la \frac{1}{4}(1,3,0), \frac{1}{4}(0,1,3) \ra$}
\label{fg:K3_degree4_fan1}
\end{minipage}
\begin{minipage}{.45 \linewidth}
  \begin{center}
    \begin{tikzpicture}
      \draw (0,0) -- (1,0) node[right] {$H-E_1-E_2$};
      \draw (0,0) -- (0,1);
      \node [anchor=south] at (0.5,1) {$H-E_3-E_4$};
      \draw (0,0) -- (-1,1) node[above left] {$E_4$};
      \draw (0,0) -- (-1,0) node[left] {$E_3-E_4$};
      \draw (0,0) -- (-1,-1) node[below left] {$H-E_1-E_3$};
      \draw (0,0) -- (0,-1) node[below] {$E_1-E_2$};
      \draw (0,0) -- (1,-1) node[below right] {$E_2$};
    \end{tikzpicture}
    \end{center}
    \caption{A weak del Pezzo surface of degree 5}
    \label{fg:K3_degree4_fan2}
\end{minipage}
\end{figure}
Each of these irreducible components
is $\bP^2$ blown-up at four points
(including infinitely near points)
whose fan is shown in \pref{fg:K3_degree4_fan2}.
The double curves among these two components have
self intersection minus one on each irreducible components.
The intersection complex of $X$ is the tetrahedron
shown in \pref{fg:tetrahedron}.
\begin{figure}
\begin{center}
\begin{tikzpicture}[scale=3]
  \draw (0,0) -- node[left] {$-1$} node[right] {$-1$} (1,1);
  \draw (0,0) -- node[above] {$-1$} node[below] {$-1$} (-1,0);
  \draw (0,0) -- node[left] {$-1$} node[right] {$-1$} (0,-1);
  \draw (-1,0) -- node[above left] {$-3$} node[below right] {$1$} (1,1);
  \draw (0,-1) -- node[above left] {$1$} node[below right] {$-3$} (1,1);
  \draw (-1,0) -- node[below left] {$-3$} node[above right] {$1$} (0,-1);
\end{tikzpicture}
\end{center}
\caption{The tetrahedron}
\label{fg:tetrahedron}
\end{figure}
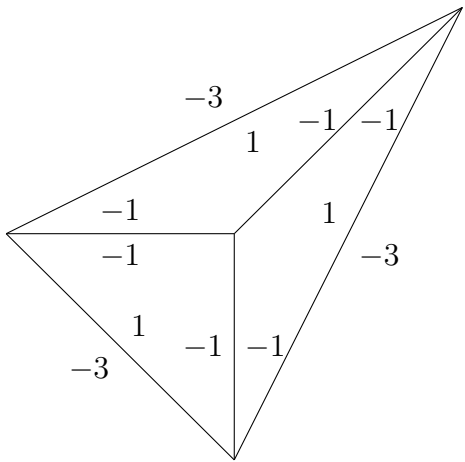
\end{proof}

\pref{th:main}
combined with
\cite{MR2819674}
proves 
\cite[Conjecture 1.5]{1806.04345}
for a smooth ample divisor of degree $2$ in a K3 surface.

\begin{corollary}
One has
\begin{align}
  \cF(D) \simeq \cW(U)/\cF(U)
\end{align}
when $U = X \setminus D$ is the complement
of a smooth ample divisor $D$
in a K3 surface $X$.
\end{corollary}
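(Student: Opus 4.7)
The plan is to pass to the Verdier quotient in the equivalences of \pref{th:main}. By (\ref{eq:wfuk=coh}) and (\ref{eq:fuk=perf}) one obtains an equivalence
\begin{align*}
  \cW(U)/\cF(U) \simeq \coh X / \perf X,
\end{align*}
whose right-hand side is by definition Orlov's triangulated category of singularities $D_{\mathrm{sg}}(X)$. The first task is to verify that the full embedding $\cF(U) \hookrightarrow \cW(U)$ is carried precisely to $\perf X \hookrightarrow \coh X$ under the equivalences of \pref{th:main}; this should follow from the intrinsic characterization of each subcategory as the subcategory of compact (equivalently, proper) objects in a smooth Calabi--Yau setting, so that the quotient is well defined and does not depend on auxiliary choices.

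The second step is to reduce the computation of $D_{\mathrm{sg}}(X)$ to a tractable algebraic model. The equivalence $\coh Z \simeq \coh X$ used in the proof of \pref{th:main} restricts to $\perf Z \simeq \perf X$, so $D_{\mathrm{sg}}(Z) \simeq D_{\mathrm{sg}}(X)$, where $Z$ is the stacky hypersurface of $\bP$ from the degree $2$ case. Orlov's theorem on the singularity category of a hypersurface then identifies $D_{\mathrm{sg}}(Z)$ with the category of $\Gamma$-equivariant graded matrix factorizations of the defining polynomial. Since the only singular point of $Z$ lies in the chart $w \ne 0$, this further reduces, up to the equivariant structure, to matrix factorizations of the Brieskorn--Pham polynomial $x^2 + y^6 + z^6$ in the degree $2$ case (and analogously to $x^4 + y^4 + z^4$ in the degree $4$ case).

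Finally I would match the resulting algebraic category with $\cF(D)$ using \cite{MR2819674}, which establishes homological mirror symmetry for the smooth genus two curve $D$ in precisely the form of an equivalence with a category of graded matrix factorizations of the Sebastiani--Thom sum $x^2 + y^6 + z^6$. Chaining the three steps produces $\cF(D) \simeq \cW(U)/\cF(U)$ in the degree $2$ case, and in particular proves \cite[Conjecture 1.5]{1806.04345} as indicated above. The main obstacle I expect lies in this last step: one must carefully match the grading groups and equivariant structures on the two matrix factorization categories, and verify that the perturbation term $xyzw$ in the defining equation of $Z$ does not alter the equivalence class of the matrix factorization category once one restricts to the singular chart. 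An analogous argument in degree $4$ would additionally require a parallel homological mirror symmetry statement for the smooth plane quartic curve of genus three.
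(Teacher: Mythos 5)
Your first step agrees with the paper: passing to Verdier quotients in the equivalences of \pref{th:main} gives $\cW(U)/\cF(U) \simeq \coh X/\perf X$, the singularity category $D_{\mathrm{sg}}(X)$, and your remark that degree $4$ would need a separate mirror theorem for the genus three curve is consistent with the paper only treating degree $2$. The gap is in the last two steps. First, \cite{MR2819674} does not prove homological mirror symmetry for the genus two curve in the form of graded matrix factorizations of the Brieskorn--Pham sum $x^2+y^6+z^6$: Seidel's mirror is the singularity category of the total transform $H$ of the hypersurface $x^5+y^5+z^5+xyz=0$ in $\Spec \bC[x,y,z]/\la \frac{1}{5}(1,1,3)\ra$ under a crepant resolution, a quite different-looking object. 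Second, the claim that the term $xyzw$ can be discarded in the singular chart is false, and this is not a technical point one can defer. In the chart $w \ne 0$ containing the unique singular point of $Z$, the equation becomes $x^2+y^6+z^6+xyz$ on $\bA^3/\la \frac{1}{2}(1,0,1), \frac{1}{6}(0,1,5)\ra$; the cubic monomial $xyz$ dominates $y^6+z^6$ near the origin (completing the square in $x$ leaves $-\tfrac{1}{4}y^2z^2+y^6+z^6$, whose lowest order part is the reducible $y^2z^2$), so the singularity is a quotient of a cusp singularity, not the Brieskorn--Pham cone point, and its (equivariant, graded) matrix factorization category is genuinely different from that of $x^2+y^6+z^6$.

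What your proposal elides is exactly the content of the paper's proof: one must compare two different cusp-type singularities, the one underlying $D_{\mathrm{sg}}(X)$ and the one underlying Seidel's mirror $H$. The paper does this geometrically: it computes the irreducible components and the intersection complexes of both normal crossing surfaces (for $H$, by resolving the strict transform of $S$ via the rational map $(x,y,z)\mapsto[x:y:z^2]$), and then connects the two configurations by a deformation that does not change the formal neighborhood of the singular locus together with a sequence of Type I modifications that do not change the derived category, concluding $D_{\mathrm{sg}}(X)\simeq D_{\mathrm{sg}}(H)$. Some such comparison is indispensable; as written, your chain of equivalences does not close up.
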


\begin{proof}
Let $S$ be the hypersurface of 
\(
  \Spec \bC[x,y,z] / \la \frac{1}{5}(1,1,3) \ra
\)
defined by
\begin{align}
  x^5 + y^5 + z^5 + x y z = 0.
\end{align}
The mirror of the genus two curve
is identified in \cite{MR2819674}
as the total transform $H$ of $S$
in a crepant resolution of
\(
  \Spec \bC[x,y,z] / \la \frac{1}{5}(1,1,3) \ra.
\)
The surface $H$ has three irreducible components $H_i$ for $i=1,2,3$.
The components
$
H_1 \cong \bP^2
$
and
$
H_2 \cong \bP(\cO_{\bP^1} \oplus \cO_{\bP^1}(-3))
$
are the exceptional divisors
of the resolution, and
the component $H_3$ is the strict transform of $S$.
In order to determine the complex structure of $H_3$,
let $T$ be the graph of the rational map
\begin{align}
  S \dashrightarrow \bP^2, \quad
  (x,y,z) \mapsto [p:q:r]=[x:y:z^2].
\end{align}
The projection $T \to \bP^2$ is the restriction
to an open subset
of a blow-up
of ten points on the conic $pq=r^2$
defined by $p^5+q^5=0$
followed by a weighted blow-up of weight $(1,2)$
of five points on the coordinate line $r=0$
defined by $p^5+q^5=0$.
It follows that $H_3$ is an open subscheme
of the blow-up of ten points on a conic
and five pairs of infinitely near points on a line
in $\bP^2$.

The compactification of the surface $H$,
whose intersection complex is shown in \pref{fg:Seidel_g2},
is related to the type III K3 surface $X$
of degree 2
appearing in the proof of \pref{th:main}
by deformation (i.e., moving the center of the blow-up to infinitely near points at $p+q=0$)
and a sequence of Type I modifications.
Since the deformation does not change the formal neighborhood of the singular locus
and the Type I modification does not change the derived category,
the stable derived categories of $X$ and $H$ are equivalent.
\end{proof}

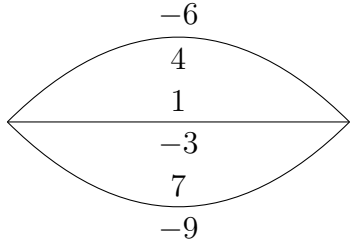
\begin{figure}
  \begin{center}
    \begin{tikzpicture}[scale=1.5]
      \draw (0,0) .. controls (1,1) and (2,1) .. node[above] {$-6$} node[below] {$4$} (3,0);
      \draw (0,0) -- node[above] {$1$} node[below] {$-3$} (3,0);
      \draw (0,0) .. controls (1,-1) and (2,-1) .. node[above] {$7$} node[below] {$-9$} (3,0);
    \end{tikzpicture}
  \end{center}
  \caption{Seidel's mirror for the genus 2 curve}
  \label{fg:Seidel_g2}
\end{figure}



\begin{remark}
Homological mirror symmetry for 2-dimensional pair of pants
(see e.g.~\cite{MR3364859,
MR2863919,
1604.00114,
MR3838112,
MR4120165}
and references therein)
implies an equivalence
\begin{align}
  \cW(U) \simeq \coh X
\end{align}
where
\begin{align}
  U \coloneqq \lc (x,y,z) \in (\bCx)^3 \relmid x+y+z+1/xyz=0 \rc
\end{align}
is the complement of a nef divisor of degree 64
and index 4
with 24 nodes
in a K3 surface
(the quartic mirror)
and $X$ is the toric boundary of $\bP^3$.
Note that $X$ can be turned into a Type III K3 surface
by blowing up at 24 points
consisting of four points on each of six double curves.
It is natural to
expect that
smoothing a node at the divisor at infinity
is mirror to
blowing up a point
on the double curve.
\end{remark}

\bibliographystyle{amsalpha}
\bibliography{bibs}

\end{document}